\documentclass[11pt]{amsart}

\usepackage{geometry}
\usepackage{amssymb}
\usepackage{mathtools}
\usepackage{xcolor}
\usepackage[colorlinks=true,citecolor=red,linkcolor=blue,urlcolor=red]{hyperref}

\DeclareMathOperator{\rd}{rd}

\newtheorem{thm}{Theorem}[section]
\newtheorem{lem}[thm]{Lemma}
\newtheorem{prop}[thm]{Proposition}

\newtheorem{conj}[thm]{Conjecture}

\theoremstyle{definition}

\newtheorem{rem}[thm]{Remark}

\numberwithin{equation}{section}

\begin{document}

\title{A counterexample to the near-quadratic Elekes--R\'onyai expander conjecture over \texorpdfstring{$\mathbb R$}{R}}

\author{Jihao Liu}
\address{Department of Mathematics, Peking University, No. 5 Yiheyuan Road, Haidian District, Beijing 100871, China}
\address{Beijing International Center for Mathematical Research, Peking University, No. 5 Yiheyuan Road, Haidian District, Beijing 100871, China}
\email{liujihao@math.pku.edu.cn}

\subjclass[2020]{52C10, 11R29, 11N36, 11H06}
\keywords{Elekes--R\'onyai problem, expanders, algebraic integers, residue sieve, geometry of numbers}
\date{}

\begin{abstract}
We disprove the near-quadratic Elekes--R\'onyai expander conjecture over $\mathbb R$. The counterexample is a fixed nonspecial quadratic polynomial, together with arbitrarily large finite sets of real algebraic integers on which its image has a fixed power saving from quadratic size. The main result of this paper is obtained by generative AI, particularly ChatGPT 5.5 Pro and the Rethlas system. The proof relies on a recent construction by OpenAI of an infinite tower of number fields.
\end{abstract}

\maketitle

\section{Introduction}\label{sec:introduction}

Elekes and R\'onyai proved that a real polynomial in two variables expands Cartesian products unless it has an additive or multiplicative form \cite[Theorem 2]{ER00}.
More precisely, the exceptional forms are
\[
f(x,y)=h(u(x)+v(y))
\quad\text{or}\quad
f(x,y)=h(u(x)v(y)),
\]
where $h,u,v\in\mathbb R[t]$ are univariate polynomials.
Quantitative versions, including the exponent $4/3$ for bounded degree polynomials, were proved by Raz--Sharir--Solymosi \cite{RSS16}; see the survey \cite[Paragraph after Theorem 1.1]{deZ18} for more details on the history on this question. Motivated by the gap between this exponent and the quadratic upper bound, Makhul--Roche-Newton--Warren--de Zeeuw asked for a near-quadratic form of the Elekes--R\'onyai phenomenon and gave a subgraph construction showing that related incidence statements are delicate \cite[Paragraph after Theorem 1.5]{MRNWdZ20}. The near-quadratic form may be stated as follows.

\begin{conj}[Near-quadratic Elekes--R\'onyai expander conjecture]\label{conj:ernqe}
Let $f\in\mathbb R[x,y]$ be neither additive nor multiplicative.
Then, for every $\epsilon>0$, there exists a constant $C_{f,\epsilon}>0$ such that
\[
\lvert f(A,A)\rvert\geq C_{f,\epsilon}\lvert A\rvert^{2-\epsilon}
\]
for every finite set $A\subset\mathbb R$.
\end{conj}
It is worth mentioning that less precise versions of Conjecture~\ref{conj:ernqe} were first proposed by Elekes in 1999 \cite[after Theorem 1]{Ele99} and also in \cite[p.~49]{Mat02}. The purpose of this note is to disprove Conjecture~\ref{conj:ernqe}.

\begin{thm}\label{thm:main}
There exist a polynomial $f\in\mathbb R[x,y]$ which is neither additive nor multiplicative, a constant $c>0$, and arbitrarily large finite sets $A\subset\mathbb R$ such that
\begin{equation}\label{eq:main-bound}
\lvert f(A,A)\rvert\leq \lvert A\rvert^{2-c}.
\end{equation}
\end{thm}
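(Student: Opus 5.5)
Our strategy is to take a fixed quadratic form $f(x,y)=x^2+xy+\alpha y^2$, or more simply $f(x,y)=x^2+dy^2$ (or $x^2-dy^2$), and to exploit its factorization as a norm form over a quadratic extension. The key identity is
\[
f(x,y)=x^2+dy^2=N_{L(\sqrt{-d})/L}(x+y\sqrt{-d}).
\]
This holds for $x,y$ in a real number field $L$. First we check that such an $f$ is neither additive nor multiplicative. A quadratic $h(u(x)+v(y))$ with $h$ nonlinear forces $\deg u=\deg v=1$ and $h$ quadratic, so $f$ would be a square of a linear form plus a constant. A multiplicative $f$ would be a product of univariate pieces. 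Neither description fits $x^2+dy^2$ with $d\neq 0$, since this form is nondegenerate and not a perfect square.

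The construction then proceeds in two steps. We take a tower of totally real number fields $L_1\subset L_2\subset\cdots$, namely the OpenAI tower referenced in the abstract. We expect its key property to be bounded root discriminant together with many small-norm elements; in particular the tower should be infinite with controlled $\rd$. For $A$ we choose the set of algebraic integers of $L_n$ lying in a box under all real embeddings, via a fixed embedding into $\mathbb R$. By geometry of numbers (Minkowski/Blichfeldt), a box of side $R$ in each of the $n_L=[L:\mathbb Q]$ embeddings contains about $R^{n_L}/\sqrt{|\Delta_L|}=(R/\rd)^{n_L}$ integers, so $|A|\approx (R/\rd)^{n_L}$. The image $f(A,A)$ lies in the integers of $L$ whose conjugates are all $O(R^2)$, giving at most about $(R^2/\rd)^{n_L}$ values. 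This count alone is too large, being about $|A|^2\rd^{n_L}$, so no saving results yet.

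The saving has to come from the norm structure, and this is the residue sieve. The values of $f$ are norms from $K=L(\sqrt{-d})$, so they avoid the residue classes of nonnorms modulo primes that are inert or ramified in $K/L$. For each such prime $\mathfrak p$ of small norm, only about half the classes modulo $\mathfrak p$ (and fewer modulo higher powers) are attained. If the tower supplies many primes of bounded norm with the right splitting behavior in $K/L$, then a Lipschitz-type count over the box with congruence conditions gives
\[
|f(A,A)|\le (R^2/\rd)^{n_L}\prod_{\mathfrak p}(1-\delta_{\mathfrak p}).
\]
The product is at most $e^{-c' n_L}$ once the number of such primes grows linearly in $n_L$. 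With $\rd$ and $R$ bounded, this beats $|A|^2$ by a factor $e^{-c n_L}$, which is a power of $|A|$.

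We expect the main obstacle to be the balancing of constants. The saving from the sieve, $\prod(1-\delta_{\mathfrak p})$, must dominate the loss $\rd^{n_L}$ incurred by the crude lattice count of the image compared with $|A|^2$. The loss factor is $(R^2/\rd)/(R/\rd)^2=\rd$ per degree. To absorb it we would instead count $f(A,A)$ inside the image of the norm map on the integer box of $K$, exploiting that norms from $\mathcal O_K$ fill only a sublattice-like set of density governed by $\rd(K)/\rd(L)^2$ and the class group. We would try first to choose $R$ large relative to $\rd$ and use the tower's splitting data so that the number of sieving primes per degree yields a saving exceeding $\log \rd$. Whether the specific tower provides enough such primes is the crucial input we would need to verify from its construction.
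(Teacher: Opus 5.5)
Your main candidate $f(x,y)=x^2+dy^2$ (and likewise $x^2-dy^2$) is additive: take $h(t)=t$, $u(x)=x^2$, $v(y)=dy^2$. Your non-speciality check skipped the case $\deg h=1$. For $x^2+y^2$, the set $A=\{\sqrt1,\dots,\sqrt n\}$ already gives $\lvert f(A,A)\rvert<2n$.

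The sieve step is also false as stated. Let $\mathfrak p\nmid 2d$ be a prime with residue field $\mathbb F_q$. A nondegenerate binary quadratic form over $\mathbb F_q$ represents every element; equivalently, the norm $\mathbb F_{q^2}^\times\to\mathbb F_q^\times$ is onto. So the values of $f$ meet every class modulo $\mathfrak p$, whether $\mathfrak p$ is inert or not. At an inert $\mathfrak p$ the only local obstruction is odd valuation, visible modulo $\mathfrak p^2$. It removes fewer than a $1/q$ proportion of the classes. The total saving per degree is therefore of order $\prod_q(1-1/q)$, which is only logarithmically small in the size of the primes. That is far too small to pay for the loss of order $\rd$ per degree that you correctly identify. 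About half of the classes are excluded only at primes dividing the discriminant of the form, where $f$ reduces to a constant times the square of a linear form.

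Even if you pass to a non-special homogeneous form such as $x^2+xy+\alpha y^2$ ($\alpha\neq\frac14$) and sieve only at such primes, your counting still gives nothing. You want the halving at all $t\,n_L$ primes above the completely split primes $q_1,\dots,q_t$ of Proposition~\ref{prop:oai-tower}(P4). For a form $ax^2+bxy+cy^2$ with rational integer coefficients, this requires $M=q_1\cdots q_t$ to divide $b^2-4ac\neq0$. That forces $\max(\lvert a\rvert,\lvert b\rvert,\lvert c\rvert)\geq\sqrt{M/5}$. The natural box containing $f(B_L(R),B_L(R))$ then has radius of order at least $\sqrt M R^2$. The sieve count becomes about $(\theta\sqrt M R^2)^{n_L}$ with $\theta=\prod_q\frac{q+1}{2q}$. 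Since $\sqrt M\,\theta=\prod_q\frac{q+1}{2\sqrt q}\geq1$, the entire saving is eaten.

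The missing idea is the shape of the paper's polynomial $(x-y)^2+Mx$. Its quadratic part is a perfect square, so every value is a square modulo every prime above every $q_b$. Non-speciality comes instead from the linear term $Mx$. Its large coefficient is harmless because one may take $R\geq M$, so that $MR\leq R^2$.

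The balancing you leave open is then settled by the tower's actual key property. That property is not an abundance of small elements. It is that $t=\lfloor(\ell-1)^2/100\rfloor$ primes split completely in every layer, while $\log\rd=O(\ell\log\ell)$. This gives $11\,\rd\,\theta<1$ with $\theta\leq(2/3)^t$, and no counting inside the image of the norm map is needed.

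Two smaller points. A Lipschitz-type count has error terms that are not uniform in the degree. The paper instead bounds each class modulo $M\mathcal O_L$ by packing: two elements of one class differ by $M\alpha$ with $\lvert N_{L/\mathbb Q}(\alpha)\rvert\geq1$. Also, $\sqrt{\Delta_L}=\rd^{n_L/2}$, not $\rd^{n_L}$.
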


The proof is arithmetic.
We use a totally real unramified tower supplied by OpenAI \cite[Proposition~3.8]{OAI26}.
It gives fields $L_j$ of degrees tending to infinity and a fixed set of rational primes which split completely in every $L_j$.
For a fixed integer $M$, we take
\begin{equation}\label{eq:test-function}
f(x,y)=(x-y)^2+Mx.
\end{equation}
At each completely split prime dividing $M$, every value of $f$ is a square residue modulo each prime of $L_j$ above it.
The proportion of allowed residue classes is therefore exponentially small in $[L_j:\mathbb Q]$.
On the other hand, boxes of algebraic integers in the Minkowski embedding contain exponentially many points, while the algebraic-integer norm gives a matching separation bound.
Combining these two estimates converts the finite-modulus saving into the power saving in \eqref{eq:main-bound}.

The construction is in the same general circle as the recent counterexamples of Bloom--Sawin--Schildkraut--Zhelezov to the real sum-product conjecture \cite{BSSZ26}.
The difference is that the polynomial here is fixed in advance and is nonspecial in the Elekes--R\'onyai sense.

\begin{rem}
The main result of this paper was obtained using generative AI, particularly ChatGPT 5.5 Pro and the Rethlas system. ChatGPT 5.5 Pro suggested that the question might be tractable using the newly developed theory in \cite{OAI26} and \cite{BSSZ26}, and recommended the test function $f(x,y)=x^2+xy+y^2$. We then ran the Rethlas system on the question and obtained an answer within 45 minutes. Rethlas did not follow ChatGPT 5.5 Pro's suggestion of $f(x,y)=x^2+xy+y^2$; instead, it used the function in \eqref{eq:test-function}. The proof was then organized as the algebraic-number-theoretic construction treated below and checked against the two signed lemmas used in Sections~\ref{sec:non-special} and~\ref{sec:sieve}.

See \cite{Ju+26} for a detailed introduction to the Rethlas system. Due to the limitation of generative AI, it is possible that we have missed some related references in the literature, and we welcome any comments from experts.
\end{rem}

\subsection*{Acknowledgements}
The author was partially supported by the National Key R\&D Program of China \#\allowbreak 2024YFA1014400.
The author would like to thank the Rethlas team, namely Haocheng Ju, Jiedong Jiang, Shurui Liu, Guoxiong Gao, Yuefeng Wang, Zeming Sun, Bin Wu, Liang Xiao, and Bin Dong, for their contributions to the development of Rethlas and its customized version used for the problem studied in this paper.
The author would like to thank Ruochuan Liu and Gang Tian for constant support and encouragement.

\subsection*{Postscript Remark.}\urlstyle{rm}\Urlmuskip=0mu plus 1mu\relax\def\UrlBreaks{\do\/\do\-\do\.\do\:}{} Everything in this paper, except the contents of this postscript remark written on June 15th and a minor revision of the introduction no later than 2026-05-28T16:52+00:00, was produced and written by generative AI based on the Rethlas system no later than 2026-05-28T15:57+00:00 and pushed to Overleaf, a trusted third party with verifiable timestamps. On 2026-06-01T01:01:34+00:00, the blog \url{https://pohoatza.wordpress.com/2026/06/01/another-one-bites-the-dust-the-elekes-ronyai-problem/} of Professor Cosmin Pohoata announced a proof of Theorem \ref{thm:main}, after the proof discovered by the Rethlas system. Professor Cosmin Pohoata subsequently posted the arXiv preprint \url{https://arxiv.org/abs/2606.13619}, of which the author was later informed by Zeming Sun.

Considering the time difference, our disproof of the near-quadratic Elekes--R\'onyai expander conjecture over $\mathbb R$ is independent of Professor Cosmin Pohoata's disproof. Still, it is clear that Professor Cosmin Pohoata was the first person to publicly announce a disproof of the Elekes--R\'onyai expander conjecture. We are not aware of an earlier instance in which a result first made public by a human had, by then, already been obtained---though not publicly---by an artificial-intelligence system. To best serve the discrete geometry community, the combinatorial community, and the AI4Math community, the author has decided to publicize this (almost entirely AI-written) paper verbatim --- including the citations and the acknowledgments, which are accurate --- except for this postscript remark itself. 

We are also informed by Professor Cosmin Pohoata that some related constructions appeared earlier in his blog of 2026-04-27T01:06:06+00:00, \url{https://pohoatza.wordpress.com/2026/04/27/sets-with-no-isosceles-triangles-repeated-distances-in-atlanta/}. As mentioned in the remark above, due to the limitations of generative AI it is possible that we have missed some related references in the literature, and this appears to be one such instance: there is a correspondence at the level of the underlying number-theoretic ideas. That blog studies planar sets with no isosceles triangles and with few repeated distances; its construction fixes a modulus $d=p_1\cdots p_t$ equal to a product of many primes $p_i\equiv 1\pmod 4$ and considers systems of lattice points $(u,v)$ cut out by congruences modulo the $p_i$ for which $u^2+v^2\equiv 0\pmod d$, which, as the blog notes, is the Gaussian-integer picture in which each such $p$ splits in $\mathbb Z[i]$ as $p=\pi\bar\pi$. Our construction likewise takes a modulus $M$ equal to a product of primes $q\equiv 1\pmod 4$ that split completely, works with algebraic integers in the Minkowski embedding, and uses that the values of $f(x,y)=(x-y)^2+Mx$ are square residues modulo each prime above $q$. We cannot determine whether the generative AI consulted this webpage without reporting it, and consequently some of the ideas in our construction may be due to Professor Cosmin Pohoata. We emphasize, however, that \url{https://pohoatza.wordpress.com/2026/04/27/sets-with-no-isosceles-triangles-repeated-distances-in-atlanta/} does not disprove the Elekes--R\'onyai near-quadratic expander conjecture.

To what extent this AI-written paper is correct may be judged by the reader. The author would like to make the following comments on the paper, based on the verification by other experts and on very useful email communications with Professor Cosmin Pohoata. The author thanks them for all their comments.

\begin{enumerate}
    \item Lemma~\ref{lem:box-estimates} and both of its bounds are correct---there is no essential error---but the proof is written too tersely. In particular, the proof of the lower bound \eqref{eq:box-lower} compresses the averaging-and-difference argument: it does not make explicit that the average number of lattice points over translates of the cube equals the cube's volume divided by the covolume $\sqrt{\Delta_L}$ of the Minkowski lattice, nor that subtracting a fixed lattice point places the resulting differences inside $[-R,R]^d$. In any case the lemma is unnecessary: it is identical to \cite[Lemma 3.3]{BSSZ26} up to notation and presentation (the cited result is stated for radius $\geq 1$, while the bounds here are stated for all $R>0$), so Section~\ref{sec:preliminaries} may be deleted and \cite[Lemma 3.3]{BSSZ26} invoked directly at the two points where the bounds are used.
    \item The reference \cite{OAI26} is given without a web locator. The cited preprint, ``Planar Point Sets with Many Unit Distances'', is hosted on the OpenAI CDN at \url{https://cdn.openai.com/pdf/74c24085-19b0-4534-9c90-465b8e29ad73/unit-distance-proof.pdf}; a human-verified companion note, ``Remarks on the disproof of the unit distance conjecture'', is available at \url{https://arxiv.org/abs/2605.20695}, and an explicit-exponent refinement by W.~Sawin at \url{https://arxiv.org/abs/2605.20579}. The reference should be completed with the appropriate locator.
    \item In Proposition~\ref{prop:oai-tower} the field $K_j=F_j(i)$ is defined but never used: the entire argument remains inside the totally real fields $F_j$, so the imaginary quadratic extension plays no role and the definition of $K_j$ may be removed.
    \item Some minor citation matters: the published references \cite{deZ18} and \cite{MRNWdZ20} carry redundant arXiv identifiers, and the entry \cite{Mat02} omits its publisher (Springer).
\end{enumerate}

\section{Boxes of algebraic integers}\label{sec:preliminaries}

We collect the elementary geometry-of-numbers estimates used in the proof of Theorem~\ref{thm:main}.

Let $L$ be a totally real number field of degree $d$, and let $\mathcal O_L$ be its ring of integers.
For $R\geq 0$, put
\[
B_L(R)=\{a\in\mathcal O_L\mid \lvert\sigma(a)\rvert\leq R\text{ for every real embedding }\sigma\colon L\to\mathbb R\}.
\]
Let $\Delta_L$ be the absolute discriminant of $L$.

\begin{lem}[Box estimates]\label{lem:box-estimates}
Let $L$ be a totally real number field of degree $d$.
For every $R>0$, we have
\begin{equation}\label{eq:box-lower}
\lvert B_L(R)\rvert\geq \frac{R^d}{\sqrt{\Delta_L}}.
\end{equation}
For every $R\geq 0$, we have
\begin{equation}\label{eq:box-upper}
\lvert B_L(R)\rvert\leq (2R+1)^d.
\end{equation}
\end{lem}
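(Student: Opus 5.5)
We work throughout with the Minkowski embedding $\iota\colon L\to\mathbb R^d$, $a\mapsto(\sigma_1(a),\dots,\sigma_d(a))$. It carries $\mathcal O_L$ onto a full lattice $\Lambda\subset\mathbb R^d$ of covolume $\sqrt{\Delta_L}$; since $L$ is totally real, no factor $2^{-r_2}$ appears. By definition, $B_L(R)$ corresponds exactly to $\Lambda\cap[-R,R]^d$. The lower bound \eqref{eq:box-lower} comes from an averaging (Blichfeldt-type) argument. The upper bound \eqref{eq:box-upper} comes from a separation property of nonzero algebraic integers, which follows from the integrality of the norm.

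\textbf{Lower bound.} Let $Q=[0,R]^d$ and $P=\mathbb R^d/\Lambda$ with a fundamental domain $D$ of volume $\sqrt{\Delta_L}$. We have
\[
\int_D \#\bigl((Q+t)\cap\Lambda\bigr)\,dt=\int_D\sum_{\lambda\in\Lambda}\mathbf 1_Q(\lambda-t)\,dt=\operatorname{vol}(Q)=R^d,
\]
by unfolding the sum over $\Lambda$. So some translate $Q-t$ contains at least $R^d/\sqrt{\Delta_L}$ lattice points. Fix one such point $\lambda_0$ and subtract it from all of them. The differences $\lambda-\lambda_0$ are still in $\Lambda$ and are distinct. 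Each coordinate is a difference of two numbers in an interval of length $R$, so it lies in $[-R,R]$. This gives $\lvert B_L(R)\rvert\geq R^d/\sqrt{\Delta_L}$. If the count $N$ of points in the translate is not an integer bound, we still have $N\geq R^d/\sqrt{\Delta_L}$ as real numbers, so nothing further is needed.

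\textbf{Upper bound.} The key fact is the following. If $a\in\mathcal O_L$ is nonzero, then $\lvert N_{L/\mathbb Q}(a)\rvert=\prod_\sigma\lvert\sigma(a)\rvert\geq 1$. Hence $a$ cannot satisfy $\lvert\sigma(a)\rvert<1$ for every $\sigma$. Now tile $[-R,R]^d$ by half-open cubes of side $1$, of the form $\prod_i[m_i,m_i+1)$ with $m_i\in\mathbb Z$. At most $\lceil 2R\rceil+1\leq 2R+2$ integer shifts meet each coordinate interval. To sharpen this to $2R+1$, use instead the cubes $\prod_i[-R+k_i,-R+k_i+1)$ with $0\leq k_i\leq\lfloor 2R\rfloor$. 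These are $(\lfloor 2R\rfloor+1)^d\leq(2R+1)^d$ cubes, and they cover $[-R,R]^d$. Two distinct points $\iota(a),\iota(b)$ in the same cube would give $a-b\neq0$ in $\mathcal O_L$ with every $\lvert\sigma(a-b)\rvert<1$, which contradicts the norm bound. So each cube contains at most one point of $B_L(R)$, and $\lvert B_L(R)\rvert\leq(2R+1)^d$. The case $R=0$ gives $B_L(0)=\{0\}$, which is consistent.

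\textbf{Main obstacle.} Neither step is deep. The point needing the most care is the lower bound. One must correctly justify that $\operatorname{covol}(\Lambda)=\sqrt{\Delta_L}$ in the totally real case, via $\det(\sigma_i(\omega_j))^2=\Delta_L$ for an integral basis $\omega_j$. One must also make sure the unfolding identity is applied to a measurable fundamental domain, so that the average genuinely equals $R^d/\sqrt{\Delta_L}$.
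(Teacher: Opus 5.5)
Your proof is correct and follows the paper's argument: the lower bound uses the same averaging of lattice-point counts over translates of a side-$R$ cube, followed by subtracting one lattice point, and the upper bound rests on the same observation that $\lvert N_{L/\mathbb Q}(a-b)\rvert\geq 1$ forces $\max_\sigma\lvert\sigma(a-b)\rvert\geq 1$. The only cosmetic difference is in the final count: you use pigeonhole over $(\lfloor 2R\rfloor+1)^d$ half-open unit cubes covering $[-R,R]^d$, whereas the paper packs disjoint open unit cubes centered at the points into a cube of side $2R+1$ and compares volumes (also, your ``translate $Q-t$'' should read $Q+t$).
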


\begin{proof}
The Minkowski embedding sends $\mathcal O_L$ to a full lattice $\Lambda\subset\mathbb R^d$ of covolume $\sqrt{\Delta_L}$.
For \eqref{eq:box-lower}, consider the cube $[-R/2,R/2]^d$.
Averaging the number of lattice points in its translates over one fundamental domain of $\Lambda$ gives $R^d/\sqrt{\Delta_L}$.
Thus some translate contains at least $R^d/\sqrt{\Delta_L}$ lattice points.
After subtracting one of these lattice points from all the points in this translate, we obtain distinct elements of $\mathcal O_L$ whose coordinates all have absolute value at most $R$.
This proves \eqref{eq:box-lower}.

For \eqref{eq:box-upper}, let $a,b\in\mathcal O_L$ be distinct.
Then $N_{L/\mathbb Q}(a-b)$ is a nonzero integer, so
\[
\prod_{\sigma\colon L\to\mathbb R}\lvert\sigma(a-b)\rvert\geq 1.
\]
Hence at least one real embedding satisfies $\lvert\sigma(a-b)\rvert\geq 1$.
The open sup-norm cubes of side length $1$ centered at the points of $B_L(R)$ are therefore disjoint, and they are contained in the cube of side length $2R+1$.
Comparing volumes gives \eqref{eq:box-upper}.
\end{proof}

\section{The polynomial is non-special}\label{sec:non-special}

In this section, we prove the non-speciality of the quadratic polynomial used in Theorem~\ref{thm:main}.

\begin{lem}\label{lem:non-special}
Let $M\in\mathbb R^\times$.
Then
\[
F_M(x,y)=(x-y)^2+Mx
\]
is neither additive nor multiplicative.
\end{lem}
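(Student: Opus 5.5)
Suppose $F_M=h(u(x)+v(y))$ or $F_M=h(u(x)v(y))$ with $h,u,v\in\mathbb R[t]$. Since $F_M$ depends on both $x$ and $y$, none of $h,u,v$ is constant. I will first reduce by degree counting. We have $\deg F_M=2$, $\deg_x F_M=2$ and $\deg_y F_M=2$, and $\deg h\cdot\deg u=\deg_x F_M=2$. The case $\deg h=2$ forces $\deg u=\deg v=1$. The case $\deg h=1$ lets me absorb $h$ into $u,v$, since an affine $h$ of a sum is again a sum, and an affine $h$ of a product is a product plus a constant. So there are four cases to rule out: (i) $F_M=u(x)+v(y)$, (ii) $F_M=u(x)v(y)+c$, (iii) $F_M=h(ax+by)$ with $h$ quadratic, and (iv) $F_M=h((ax+a')(by+b'))$ with $h$ quadratic.

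Case (i) fails because the mixed derivative $\partial_x\partial_y F_M=-2\neq0$.

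In case (ii), degree considerations give $\deg u=\deg v=2$ or smaller. Then the product has total degree $\deg u+\deg v$. For this to equal $2$ while $\deg_x=\deg_y=2$ we would need total degree $4$, which is impossible.

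Case (iv) has the same problem: $h$ quadratic of a bilinear product has total degree $4$, unless $a=0$ or $b=0$, which would kill the dependence on one variable.

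Case (iii) is the main one. Write $h(t)=\alpha t^2+\beta t+\gamma$. Then the quadratic part $\alpha(ax+by)^2$ must equal $(x-y)^2$, so up to scaling $ax+by$ is proportional to $x-y$. The linear part of $F_M$ would then be $\beta(ax+by)$, a multiple of $x-y$. But the linear part of $F_M$ is $Mx$, and $Mx$ is not proportional to $x-y$ because $M\neq0$. This contradiction finishes the proof.

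The main obstacle I expect is making the degree reduction rigorous, in particular the claim that $\deg_x\bigl(h(u(x)v(y))\bigr)=\deg h\cdot\deg u$ and that the total degree is $\deg h(\deg u+\deg v)$. Both follow from taking leading terms. To keep this clean I would compare top homogeneous components, or compare the degree in $x$ with $y$ specialized to a generic real value.
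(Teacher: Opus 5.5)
Your proposal is correct and follows essentially the same route as the paper. In the additive case, degree counting (with $\deg h=1$ ruled out by the mixed term $-2xy$) forces a quadratic $h$ and linear $u,v$; matching the quadratic part to $(x-y)^2$ then makes the linear part a multiple of $x-y$, which cannot equal $Mx$ when $M\neq 0$. In the multiplicative case a total-degree count gives the contradiction. The only difference is bookkeeping: you split into four cases via $\deg_x$ first, whereas the paper uses total degree directly in the multiplicative case to force $\deg h=\deg u=\deg v=1$ and then notes the missing $x^2$ term.
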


\begin{proof}
Suppose first that $F_M$ is additive, so
\[
F_M(x,y)=h(u(x)+v(y))
\]
for some univariate polynomials $h,u,v\in\mathbb R[t]$ with $u$ and $v$ nonconstant.
Let $n=\deg h$, $p=\deg u$, and $q=\deg v$.
The mixed term of $F_M$ is nonzero, so $n\geq 2$.
Since the degrees of $F_M$ in $x$ and in $y$ are both $2$, we have $np\leq 2$ and $nq\leq 2$.
Thus $n=2$ and $p=q=1$.
Write
\[
h(t)=at^2+bt+c,\qquad u(x)=\alpha x+\beta,\qquad v(y)=\gamma y+\delta,
\]
where $a\alpha\gamma\neq 0$.
Comparing the quadratic coefficients with those of $F_M$ gives
\[
a\alpha^2=1,\qquad a\gamma^2=1,\qquad 2a\alpha\gamma=-2.
\]
It follows that $\gamma=-\alpha$.
If $e=\beta+\delta$, the coefficients of $x$ and $y$ in $h(u(x)+v(y))$ are $(2ae+b)\alpha$ and $(2ae+b)\gamma$, respectively.
They are negatives of each other.
In $F_M$, these coefficients are $M$ and $0$.
Thus $M=0$, a contradiction.

Suppose next that $F_M$ is multiplicative, so
\[
F_M(x,y)=h(u(x)v(y))
\]
with $u$ and $v$ nonconstant.
The highest-degree term of $h(u(x)v(y))$ has total degree $\deg h(\deg u+\deg v)$.
Since $F_M$ has total degree $2$, we must have $\deg h=1$ and $\deg u=\deg v=1$.
Then $h(u(x)v(y))$ is a constant plus a nonzero scalar multiple of a product of two linear forms in the separate variables $x$ and $y$.
Such a polynomial has no $x^2$ or $y^2$ term, whereas $F_M$ has both.
This is again a contradiction.
\end{proof}

\section{The square-residue sieve}\label{sec:sieve}

The next lemma is the finite-modulus sieve used to bound the image of the polynomial.

\begin{lem}\label{lem:square-sieve}
Let $L$ be a totally real number field of degree $d$ with ring of integers $\mathcal O_L$.
Let $T$ be a finite nonempty set of odd rational primes that split completely in $L$.
Put
\[
M=\prod_{q\in T}q,
\qquad
\theta=\prod_{q\in T}\frac{q+1}{2q}.
\]
For $X\geq 1$, the number of elements $t\in B_L(X)$ for which
\[
t=(x-y)^2+Mx
\]
for some $x,y\in\mathcal O_L$ is at most
\[
\theta^d(2X+M)^d.
\]
\end{lem}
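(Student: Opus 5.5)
We first record the local constraint on the values. Let $q\in T$. Since $q$ splits completely in $L$, there are exactly $d$ primes $\mathfrak q_1,\dots,\mathfrak q_d$ of $\mathcal O_L$ above $q$, each with residue field $\mathcal O_L/\mathfrak q_i\cong\mathbb F_q$. Because $q\mid M$, we have $Mx\in\mathfrak q_i$ for every $x\in\mathcal O_L$. Hence any $t=(x-y)^2+Mx$ satisfies
\[
t\equiv (x-y)^2\pmod{\mathfrak q_i}
\]
for all $i$. So the image of $t$ in $\mathbb F_q$ is a square, zero included, and there are exactly $(q+1)/2$ such classes since $q$ is odd. By the Chinese remainder theorem,
\[
\mathcal O_L/M\mathcal O_L\cong\prod_{q\in T}\prod_{i=1}^d\mathcal O_L/\mathfrak q_i\cong\prod_{q\in T}\mathbb F_q^d .
\]
Therefore the residues of such $t$ modulo $M\mathcal O_L$ lie in a set $S$ of size
\[
\lvert S\rvert=\prod_{q\in T}\Bigl(\frac{q+1}{2}\Bigr)^d=\theta^d M^d,
\]
while $\lvert\mathcal O_L/M\mathcal O_L\rvert=M^d$.

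Next we count elements of $B_L(X)$ in a fixed residue class $r+M\mathcal O_L$. Take two elements $t,t'$ of $B_L(X)$ in the same class. Then $t-t'=Ms$ with $s\in\mathcal O_L$ and $\lvert\sigma(s)\rvert\le 2X/M$ for every embedding $\sigma$. Fix one such $t_0$ in the class. The map $t\mapsto (t-t_0)/M$ is injective into $B_L(2X/M)$, so the upper bound \eqref{eq:box-upper} of Lemma~\ref{lem:box-estimates} gives at most
\[
\Bigl(\frac{4X}{M}+1\Bigr)^d
\]
elements. This is weaker than needed.

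Instead we rerun the packing argument directly, as in the proof of \eqref{eq:box-upper}. Distinct elements of the class differ by $Ms$ with $s\neq0$, so $\lvert N(Ms)\rvert\ge M^d$. Hence some embedding has $\lvert\sigma(t-t')\rvert\ge M$. The open sup-norm cubes of side $M$ centred at these points are then pairwise disjoint. They all lie in the cube $[-X-M/2,\,X+M/2]^d$, whose volume is $(2X+M)^d$. Comparing volumes gives at most $(2X+M)^d/M^d$ points per class.

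Summing over the $\theta^dM^d$ allowed classes gives the bound
\[
\theta^d M^d\cdot\frac{(2X+M)^d}{M^d}=\theta^d(2X+M)^d,
\]
as claimed. The main point to get right is this per-class bound: the crude rescaling through Lemma~\ref{lem:box-estimates} loses a factor, and the norm-based disjoint-cube argument is what yields exactly $(2X+M)^d/M^d$. The hypothesis $X\ge1$ is not really needed.
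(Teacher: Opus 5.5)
Your proposal is correct and matches the paper's proof: the square-residue restriction modulo each of the $d$ split primes above every $q\in T$, combined via the Chinese remainder theorem, gives $(M\theta)^d$ allowed classes modulo $M\mathcal O_L$, and the norm-based disjoint-cube packing gives at most $\bigl((2X+M)/M\bigr)^d$ points of $B_L(X)$ per class. Your preliminary detour through the rescaled box estimate is unnecessary but harmless, and you are right that the hypothesis $X\geq 1$ plays no role.
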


\begin{proof}
For each $q\in T$, write
\[
q\mathcal O_L=\mathfrak q_{q,1}\cdots \mathfrak q_{q,d}.
\]
Since $q$ splits completely in $L$, each residue field $\mathcal O_L/\mathfrak q_{q,\nu}$ is isomorphic to $\mathbb F_q$.
Let $\Omega$ be the set of residue classes modulo $M\mathcal O_L$ whose image modulo every $\mathfrak q_{q,\nu}$ is a square in $\mathbb F_q$, with $0$ allowed.
By the Chinese remainder theorem,
\[
\lvert\Omega\rvert
=\prod_{q\in T}\left(\frac{q+1}{2}\right)^d
=(M\theta)^d.
\]

If $t=(x-y)^2+Mx$, then for every $q\in T$ and every $\nu$ we have
\[
t\equiv (x-y)^2\pmod{\mathfrak q_{q,\nu}},
\]
because $q$ divides $M$.
Thus every represented $t$ has residue class in $\Omega$ modulo $M\mathcal O_L$.

It remains to count elements in one residue class.
Fix a class modulo $M\mathcal O_L$.
If $a,b\in B_L(X)$ are distinct elements in this class, then $a-b=M\alpha$ for some nonzero $\alpha\in\mathcal O_L$.
Hence
\[
\prod_{\sigma\colon L\to\mathbb R}\lvert\sigma(a-b)\rvert
=M^d\lvert N_{L/\mathbb Q}(\alpha)\rvert
\geq M^d.
\]
Therefore at least one real embedding satisfies $\lvert\sigma(a-b)\rvert\geq M$.
The open sup-norm cubes of side length $M$ centered at the elements of this fixed residue class inside $B_L(X)$ are disjoint, and they lie in the cube of side length $2X+M$.
Thus this residue class contributes at most
\[
\left(\frac{2X+M}{M}\right)^d
\]
elements.
Multiplying by $\lvert\Omega\rvert=(M\theta)^d$ gives the claimed bound.
\end{proof}

\section{Proof of the main theorem}\label{sec:proof-main}

In this section, we combine the tower from \cite{OAI26} with Lemma~\ref{lem:square-sieve}.
We use only the following part of \cite[Proposition~3.8]{OAI26}.

\begin{prop}\label{prop:oai-tower}
For all sufficiently large integers $\ell$, set
\[
t=\left\lfloor\frac{(\ell-1)^2}{100}\right\rfloor.
\]
Then there are a number field $F$, distinct rational primes $q_1,\ldots,q_t$ fixed independently of $j$, and fields $F_j$, with $F_0=F$, satisfying the following properties.
Write $f_j=[F_j:\mathbb Q]$ and $K_j=F_j(i)$.
\begin{enumerate}
\item[(P1)] The base field $F$ is totally real, cyclic cubic over $\mathbb Q$, does not contain $\zeta_3$, and has controlled root discriminant
\[
\log \rd(F)=O(\ell\log\ell).
\]
\item[(P2)] The fields form an infinite tower
\[
F=F_0\subset F_1\subset F_2\subset\cdots
\]
such that each $F_j/F$ is finite Galois, everywhere unramified, and has $3$-group Galois group. Moreover $f_j\to\infty$.
\item[(P3)] Every $F_j$ is totally real, and the root discriminant is constant:
\[
\rd(F_j)=\rd(F).
\]
\item[(P4)] Each $q_b$, for $1\leq b\leq t$, satisfies $q_b\equiv 1\pmod 4$ and splits completely in every $F_j$.
\end{enumerate}
\end{prop}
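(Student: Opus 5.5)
The proposition is a restatement, in the notation of this paper, of part of \cite[Proposition~3.8]{OAI26}. My plan is to check that each of (P1)--(P4) is either stated there or follows formally from it. I will also indicate how I would reprove the statement from scratch via the Golod--Shafarevich method with splitting conditions. Concretely, I would first match the parameters: $\ell$, the number $t=\lfloor(\ell-1)^2/100\rfloor$ of split primes, the base field $F$, and the tower $(F_j)$ with its fixed primes $q_1,\dots,q_t$. The field $K_j=F_j(i)$ plays no role and only needs to be defined.

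For a self-contained argument, I would proceed in three steps.

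\textbf{Step 1: the base field.} Choose roughly $\ell$ distinct primes $p\equiv 1\pmod 3$. Let $F$ be a cyclic cubic field whose conductor is their product. Such a field is automatically totally real, being an odd-degree abelian field. It does not contain $\zeta_3$, since $\mathbb Q(\zeta_3)$ is imaginary. Its root discriminant is $(\text{conductor})^{2/3}$, so choosing the $p$ of size $\mathrm{poly}(\ell)$ gives $\log\rd(F)=O(\ell\log\ell)$, which is (P1). By genus theory, $\mathrm{Cl}(F)[3]$ then has rank at least about $\ell-1$.

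\textbf{Step 2: the split primes.} Using Chebotarev in the compositum of $\mathbb Q(i)$ and the cubic characters, pick $t$ primes $q_b\equiv 1\pmod 4$ that split completely in $F$.

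\textbf{Step 3: the tower.} Let $S$ be the set of primes of $F$ above the $q_b$, so $\lvert S\rvert=3t$. Let $F_\infty$ be the maximal unramified pro-$3$ extension of $F$ in which all primes of $S$ split completely. Its finite Galois subextensions $F_j$ give (P2). Because $3$ is odd, real places cannot ramify, so every $F_j$ stays totally real. Because every $F_j/F$ is unramified, $\rd(F_j)=\rd(F)$. This gives (P3). Each $q_b$ splits completely in $F$ and its primes split completely up the tower, which gives (P4).

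The main obstacle is proving that $F_\infty/F$ is infinite, so that $f_j\to\infty$. Here I would apply the Golod--Shafarevich criterion with splitting conditions (in Hajir--Maire form) to $G=\mathrm{Gal}(F_\infty/F)$. The generator rank satisfies $d(G)\ge \mathrm{rk}_3\,\mathrm{Cl}(F)-\lvert S\rvert-O(1)$, and the relation rank satisfies $r(G)\le d(G)+\text{(unit rank)}+\lvert S\rvert$. The crude bound of Step 1 is not enough, because $\lvert S\rvert=3t\sim\ell^2$ exceeds $\ell$. So I would first use a Galois-equivariant refinement: the splitting of the $q_b$ should be imposed one Galois orbit at a time, and the genus/reflection-rank input should give growth of the required order. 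I would then check that the inequality $r<d^2/4$ holds precisely in the range $t\approx(\ell-1)^2/100$. This quadratic count is exactly where the constant $1/100$ in $t$ comes from, and it is the step I would simply import from \cite{OAI26} if the refined rank computation became delicate.
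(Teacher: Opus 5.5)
Your primary plan, reading the proposition as a transcription of part of \cite[Proposition~3.8]{OAI26} and checking (P1)--(P4) against it, is exactly what the paper does. The paper gives no argument beyond that citation, and its postscript likewise notes that $K_j$ is never used.

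Your from-scratch sketch, however, has a genuine gap. The defect is in Step~2, so no refined rank count in Step~3 can repair it: if the $q_b$ are only required to be $\equiv 1\pmod 4$ and split in $F$, the group $G$ can be trivial.

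Take exactly $\ell$ ramified primes $p_1,\dots,p_\ell$, let $\sigma$ generate $\mathrm{Gal}(F/\mathbb Q)$, and let $\Gamma$ be the genus field, i.e.\ the compositum of the cubic subfields of the $\mathbb Q(\zeta_{p_i})$. Then $\mathrm{Cl}(F)/(1-\sigma)\mathrm{Cl}(F)\cong\mathrm{Gal}(\Gamma/F)\cong(\mathbb Z/3)^{\ell-1}$. The $3$-part of $\mathrm{Cl}(F)$ is a module over the discrete valuation ring $\mathbb Z_3[\sigma]/(1+\sigma+\sigma^2)$ with uniformizer $1-\sigma$, and $S$ is Galois-stable. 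By Chebotarev in $\Gamma(i)$ you may choose $\ell-1$ of the $q_b$, each $\equiv 1\pmod 4$ and split in $F$, whose Frobenius elements form a basis of $\mathrm{Gal}(\Gamma/F)$. Nakayama's lemma then shows that the classes of primes in $S$ generate the whole $3$-part of $\mathrm{Cl}(F)$, so $d(G)=0$ and $G=1$. Galois-equivariance therefore works against you, not for you.

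The missing idea is a stronger Chebotarev condition: require each $q_b$ to split completely in $\Gamma$, i.e.\ to be a cube modulo every $p_i$.
\begin{itemize}
\item Then $\Gamma/F$ is unramified and elementary abelian of rank $\ell-1$, with $S$ splitting completely in it. Hence $\Gamma\subset F_\infty$ and $d(G)\geq \ell-1$, however large $t$ is.
\item Shafarevich's bound gives $r(G)\leq d(G)+2+3t$; the $\delta$-term vanishes because $\zeta_3\notin F$.
\item Since $x^2/4-x$ is increasing for $x\geq 2$, the Golod--Shafarevich inequality forces $G$ to be infinite once $3t+2\leq (\ell-1)^2/4-(\ell-1)$.
\end{itemize}
This holds for $t=\lfloor(\ell-1)^2/100\rfloor$ and large $\ell$. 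So $1/100$ is just a safe constant below about $1/12$, not the sharp threshold your last sentence suggests.
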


\begin{proof}[Proof of Theorem~\ref{thm:main}]
By Proposition~\ref{prop:oai-tower}(P1), there exist absolute constants $C_0$ and $\ell_0$ such that
\[
\log \rd(F)\leq C_0\ell\log\ell
\]
for every $\ell\geq \ell_0$ for which Proposition~\ref{prop:oai-tower} is applied.
Choose $\ell$ large enough so that Proposition~\ref{prop:oai-tower} applies, so that $t=\lfloor(\ell-1)^2/100\rfloor$ is positive, and so that
\begin{equation}\label{eq:ell-choice}
\log 11+C_0\ell\log\ell+t\log(2/3)<0.
\end{equation}
This is possible because $t$ grows quadratically in $\ell$.

Let $F_j$ and $q_1,\ldots,q_t$ be supplied by Proposition~\ref{prop:oai-tower}.
Put
\[
D=\rd(F_0),\qquad
T=\{q_1,\ldots,q_t\},\qquad
M=\prod_{q\in T}q,\qquad
\theta=\prod_{q\in T}\frac{q+1}{2q}.
\]
Since every $q\in T$ is odd, we have $(q+1)/(2q)\leq 2/3$.
By \eqref{eq:ell-choice},
\[
\log(11D\theta)
\leq \log 11+C_0\ell\log\ell+t\log(2/3)
<0.
\]
Thus
\begin{equation}\label{eq:theta-saving}
11D\theta<1.
\end{equation}

Define
\[
f(x,y)=(x-y)^2+Mx.
\]
Since $M\neq 0$, Lemma~\ref{lem:non-special} shows that $f$ is neither additive nor multiplicative.

Choose a real number $R$ such that
\[
R\geq \max\{M,1\}
\quad\text{and}\quad
R>\sqrt D.
\]
For each $j$, put $L_j=F_j$ and $d_j=[L_j:\mathbb Q]$.
Let
\[
P_j=B_{L_j}(R).
\]
Choose one real embedding $\tau_j\colon L_j\to\mathbb R$, and define
\[
A_j=\tau_j(P_j)\subset\mathbb R.
\]
Since $\tau_j$ is injective, $\lvert A_j\rvert=\lvert P_j\rvert$.
By Proposition~\ref{prop:oai-tower}(P3), we have $\rd(L_j)=D$.
Thus Lemma~\ref{lem:box-estimates} gives
\begin{equation}\label{eq:Aj-lower}
\lvert A_j\rvert\geq \left(\frac{R}{\sqrt D}\right)^{d_j}.
\end{equation}
Since $R>\sqrt D$ and $d_j\to\infty$ by Proposition~\ref{prop:oai-tower}(P2), the sets $A_j$ have arbitrarily large cardinality.
The upper estimate in Lemma~\ref{lem:box-estimates} gives
\begin{equation}\label{eq:Aj-upper}
\lvert A_j\rvert\leq (2R+1)^{d_j}.
\end{equation}

For $x,y\in P_j$ and every real embedding $\sigma\colon L_j\to\mathbb R$, we have
\[
\lvert\sigma(f(x,y))\rvert
\leq \lvert\sigma(x)-\sigma(y)\rvert^2+M\lvert\sigma(x)\rvert
\leq 4R^2+MR
\leq 5R^2.
\]
Hence
\[
f(P_j,P_j)\subset B_{L_j}(5R^2).
\]
By Proposition~\ref{prop:oai-tower}(P4), every prime in $T$ splits completely in $L_j$.
Applying Lemma~\ref{lem:square-sieve} with $L=L_j$ and $X=5R^2$ gives
\[
\lvert f(P_j,P_j)\rvert
\leq \theta^{d_j}(10R^2+M)^{d_j}
\leq \theta^{d_j}(11R^2)^{d_j}.
\]
The polynomial $f$ has rational integer coefficients, so
\[
f(A_j,A_j)=\tau_j(f(P_j,P_j)).
\]
Since $\tau_j$ is injective, the same estimate holds for $\lvert f(A_j,A_j)\rvert$.
On the other hand, \eqref{eq:Aj-lower} gives
\[
\lvert A_j\rvert^2\geq \left(\frac{R^2}{D}\right)^{d_j}.
\]
Therefore
\[
\lvert f(A_j,A_j)\rvert
\leq (11D\theta)^{d_j}\lvert A_j\rvert^2.
\]
Let
\[
\eta=-\log(11D\theta)>0.
\]
Then
\[
\lvert f(A_j,A_j)\rvert\leq \exp(-\eta d_j)\lvert A_j\rvert^2.
\]
Define
\[
c=\frac{\eta}{\log(2R+1)}>0.
\]
By \eqref{eq:Aj-upper}, we have
\[
\exp(-\eta d_j)\leq \lvert A_j\rvert^{-c}.
\]
Thus
\[
\lvert f(A_j,A_j)\rvert\leq \lvert A_j\rvert^{2-c}
\]
for every $j$.
Since the sizes $\lvert A_j\rvert$ are arbitrarily large, the theorem follows.
\end{proof}

\begin{rem}\label{rem:sum-product}
Bloom--Sawin--Schildkraut--Zhelezov \cite{BSSZ26} disprove the real sum-product conjecture by constructing algebraic-integer sets with a power saving for both sums and products.
The construction above is tailored to one fixed nonspecial Elekes--R\'onyai polynomial; the saving comes from square-residue restrictions at completely split primes.
\end{rem}

\end{document}